\newtheorem{cor}{Corollary}[section]
\newtheorem{te}[cor]{Theorem}
\newtheorem{p}[cor]{Proposition}
\theoremstyle{definition}
\theoremstyle{remark}
\newcommand{\nz}{\mathbb{N}}
\newcommand{\vp}{\varepsilon}
\begin{document}

\title{All automorphisms of the universal sofic group are class-preserving}
\maketitle
\begin{center}
Liviu P\u aunescu\footnote{Work supported by a grant of the Romanian National Authority
for Scientific Research, CNCS - UEFISCDI, project number
PN-II-ID-PCE-2012-4-0201‏}
\end{center}

\begin{center}
\emph{Dedicated to Professor \c{S}erban Str\u{a}til\u{a} on his 70th birthday.}
\end{center}

 $\mathbf{Abstract.}$ In this article we prove that every automorphism of a universal sofic group is class preserving.

\tableofcontents

\section{Introduction}

The universal sofic group was introduced by Elek and Szabo in \cite{El-Sza} in order to provide a new characterization of sofic groups. In this setting a countable group is sofic if and only if there exists an injective morphism into the universal sofic group. A \emph{sofic representation} of a countable group is a morphism from the group into the universal sofic group that is faithful in a certain sense.

In \cite{El-Sza2} the same authors proved that any two sofic representations of a group are conjugate by an inner automorphism of the universal sofic group if and only if the group is amenable. The space of sofic representations factored by the equivalence relation given by conjugacy using inner automorphisms was introduced in \cite{Pa} following ideas from \cite{Br}. A combination of arguments from \cite{El-Sza2} and the appendix of \cite{Br} written by Taka Ozawa shows that this space is non-separable for a sofic non-amenable group. So, it is a natural question to see if there is a different outcome when one uses general automorphisms instead of just inner automorphisms of the universal sofic group. Here we show that each such automorphism is class-preserving, i.e. preserves the conjugacy classes. Recenlty Capraro and Lupini showed that, if we assume the Continuum Hypothesis, then there exists non-inner automorphisms (Corollary II.4.7 of \cite{Ca-Lu}). The problem is still open in the absence of CH.

\subsection{Notation}

For $n\in\nz^*$ we denote by $S_n$ the symmetric group on the set $\{1,2,\ldots,n\}$. The normalized Hamming distance on this group is defined as:
\[d_{Hamm}(p,q)=\frac1n|\{a:p(a)\neq q(a)\}|,\]
i.e. it counts the number of points where $p$ and $q$ are different as functions.

For $i\in\nz^*$ denote by $cyc_i(p)$ the number of cycles of size $i$ in $dcd(p)$ multiplied by $i$, so that $n=\sum_{i\geqslant 1}cyc_i(p)$. Then $cyc_1(p)$ denotes the number of fixed points and it is easy to see that $d_{Hamm}(p,Id)=1-\frac1ncyc_1(p)$.

Following the notation in \cite{HKL} we denote by $m(p)$ the size of the support of $p$ and by $n(p)$ the number of nontrivial cycles in $p$ for $p\in S_n$. Then:
\[m(p)=n-cyc_1(p);\ \ \ n(p)=\sum_{i\geqslant 2}\frac{cyc_i(p)}{i}.\]

\subsection{The universal sofic group}

From now on let $\omega$ be a non-principal (free) ultrafilter and let $\{n_k\}_k\subset\nz^*$ be a sequence such that $\lim_kn_k=\infty$. Let $\Pi_kS_{n_k}$ be the Cartesian product and define: \[\mathcal{N}_\omega=\{(p_k)_k\in\Pi_kS_{n_k}:\lim_{k\to\omega} cyc_1(p_k)/n_k=1\}=\{(p_k)_k:\lim_{k\to\omega} d_{Hamm}(p_k,Id)=0\}.\] Due to the bi-invariance of the Hamming distance, $\mathcal{N}_\omega$ is a normal subgroup of $\Pi_kS_{n_k}$ so we can define the universal sofic group as: $\Pi_{k\to\omega}S_{n_k}=\Pi_kS_{n_k}/\mathcal{N}_\omega$.

Denote by $\Pi_{k\to\omega}p_k$ the generic element of the universal sofic group, where $p_k\in S_{n_k}$. By construction the Hamming distance can be extended on this ultraproduct, so for an element  $p=\Pi_{k\to\omega}p_k\in\Pi_{k\to\omega}S_{n_k}$ the value $cyc_1(p)=\lim_{k\to\omega}\frac{cyc_1(p_k)}{n_k}$ is well defined, i.e. it does not depend on the particular choice of elements $p_k\in S_{n_k}$ as long as $p=\Pi_{k\to\omega}p_k$. Actually, for each $i\in\nz^*$ we define:
\[cyc_i(p)=\lim_{k\to\omega}\frac{cyc_i(p_k)}{n_k}.\]
It came as a surprise to me that these numbers are well defined when I first read about this in \cite{El-Sza}. This result can be deduced using the numbers $cyc_1(p)$. For example $cyc_2(p)=cyc_1(p^2)-cyc_1(p)$. In general $cyc_1(p^i)=\sum_{j|i}cyc_j(p)$ and using an inclusion-exclusion principle one can reach the formula (we shall not use it):
\[cyc_i(p)=\sum_{(\vp_1,\ldots,\vp_t)\in\{0,1\}^t}(-1)^{\vp_1+\ldots+\vp_t}cyc_1(p^{a_1^{r_1-\vp_1}\ldots a_t^{r_t-\vp_t}}),\]
where $i=a_1^{r_1}\ldots a_t^{r_t}$ is the decomposition into prime numbers of $i$.

It is an easy exercise to see that $\sum_{i\geqslant 1}cyc_i(p)\leqslant 1$. Let $cyc_\infty(p)=1-\sum_{i\geqslant 1}cyc_i(p)$.
Lastly, define $m(p)=\lim_{k\to\omega}\frac{m(p_k)}{n_k}$ and $n(p)=\lim_{k\to\omega}\frac{n(p_k)}{n_k}$. We can see that:
\[m(p)=1-cyc_1(p);\ \ \ n(p)=\sum_{i\geqslant 2}\frac{cyc_i(p)}{i}.\]

\subsection{Characterizing conjugacy classes} 

It is well known that two permutations $p,q\in S_n$ are conjugate if and only if $cyc_i(p)=cyc_i(q)$ for any $i\in\nz^*$. Due to a theorem of Elek and Szabo the same is true for elements in the ultraproduct $\Pi_{k\to\omega}S_{n_k}$. We reproduce the proof here.

\begin{p}[Proposition 2.3(4),\cite{El-Sza}]\label{conjugacyclasses}
Two elements $p,q\in\Pi_{k\to\omega}S_{n_k}$ are conjugate if and only if $cyc_i(p)=cyc_i(q)$ for all $i\in\nz^*$.
\end{p}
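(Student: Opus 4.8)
The plan is to prove both implications through lifts to the symmetric groups $S_{n_k}$.

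The forward implication is immediate. If $p=gqg^{-1}$ in $\Pi_{k\to\omega}S_{n_k}$, choose lifts $p_k,q_k,g_k\in S_{n_k}$; then $(g_kq_kg_k^{-1})_k$ is also a lift of $p$, and since conjugate permutations of $S_{n_k}$ have equal cycle counts we get $cyc_i(p)=\lim_{k\to\omega}cyc_i(g_kq_kg_k^{-1})/n_k=\lim_{k\to\omega}cyc_i(q_k)/n_k=cyc_i(q)$ for every $i\in\nz^*$.

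For the converse I would fix lifts $p_k,q_k\in S_{n_k}$, so that $cyc_\ell(p_k)/n_k\to cyc_\ell(p)$ and $cyc_\ell(q_k)/n_k\to cyc_\ell(q)$ along $\omega$ for each $\ell$, and then construct $g_k\in S_{n_k}$ with $d_{Hamm}(p_k,g_kq_kg_k^{-1})\to 0$ along $\omega$; this is enough, for then $g=\Pi_{k\to\omega}g_k$ conjugates $q$ to $p$. Let $a^p_{\ell,k}$ and $a^q_{\ell,k}$ be the numbers of $\ell$-cycles of $p_k$ and of $q_k$, so $\ell a^p_{\ell,k}=cyc_\ell(p_k)$. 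I would build $g_k$ in two stages. First, for each $\ell$ pick $\min(a^p_{\ell,k},a^q_{\ell,k})$ of the $\ell$-cycles of $q_k$ and send them, cycle by cycle, onto the same number of $\ell$-cycles of $p_k$; on the union of these cycles $g_kq_kg_k^{-1}$ then agrees exactly with $p_k$. The ``leftover'' parts --- the unions of the remaining cycles of $p_k$ and of $q_k$ --- have the same cardinality $R_k=n_k-\sum_\ell\ell\min(a^p_{\ell,k},a^q_{\ell,k})$. List the leftover points of $q_k$ as $x_1,\dots,x_{R_k}$ and those of $p_k$ as $y_1,\dots,y_{R_k}$, by reading each leftover cycle as a cyclic word from a chosen base point and concatenating, and set $g_k(x_j)=y_j$. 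A short check shows that on the leftover part $g_kq_kg_k^{-1}$ and $p_k$ can disagree only at the ``cycle ends'' of these two listings, whence
\[d_{Hamm}(p_k,g_kq_kg_k^{-1})\le\frac{C^p_k+C^q_k}{n_k},\]
with $C^p_k$ and $C^q_k$ the numbers of leftover cycles of $p_k$ and of $q_k$.

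It remains to show $C^p_k/n_k\to 0$ along $\omega$, and likewise for $q$. Fix $N\in\nz^*$. The leftover cycles of $p_k$ of length greater than $N$ are disjoint, so there are at most $n_k/N$ of them, while those of length at most $N$ number $\sum_{\ell\le N}\max(0,a^p_{\ell,k}-a^q_{\ell,k})\le\sum_{\ell\le N}|cyc_\ell(p_k)-cyc_\ell(q_k)|$, which tends along $\omega$ to $\sum_{\ell\le N}|cyc_\ell(p)-cyc_\ell(q)|=0$. Hence $\limsup_{k\to\omega}C^p_k/n_k\le 1/N$ for every $N$, so $C^p_k/n_k\to 0$ along $\omega$. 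Combined with the displayed bound this gives $d_{Hamm}(p_k,g_kq_kg_k^{-1})\to 0$ along $\omega$, and since $d_{Hamm}$ is an honest metric on $\Pi_{k\to\omega}S_{n_k}$ (distance zero means the difference lies in $\mathcal{N}_\omega$) we conclude $p=gqg^{-1}$.

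The step I expect to be the real obstacle is handling the ``infinite part'': a positive proportion of the points may sit in cycles whose length grows with $k$, and such cycles cannot be matched length-by-length. The concatenation trick is what disposes of them --- two permutations assembled from few cycles are automatically Hamming-close up to conjugacy --- and the hypothesis $cyc_\ell(p)=cyc_\ell(q)$ for all $\ell$ is precisely what forces the number of leftover cycles, after the exact matching of stage one, to be $o(n_k)$ along $\omega$. The remaining book-keeping (equality of the leftover cardinalities, and the exact location of the disagreements in the concatenated listings) should be routine.
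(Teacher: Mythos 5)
Your proof is correct and follows essentially the same route as the paper's: match $\min$-many cycles of each finite length exactly, then control the leftover by the cutoff estimate showing it consists of $o(n_k)$ cycles along $\omega$. Your explicit conjugator plus Hamming bound of $(C^p_k+C^q_k)/n_k$ is just a hands-on version of the paper's step of gluing the leftover cycles of $p_k$ and $q_k$ into single large cycles of equal length without changing the ultraproduct elements.
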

\begin{proof}
If $p$ and $q$ are conjugate then we can find $p_k,q_k$ such that $p=\Pi_{k\to\omega}p_k$, $q=\Pi_{k\to\omega}q_k$ and each $p_k$ is conjugate to $q_k$. The result will then follow.

For the converse assume that $cyc_i(p)=cyc_i(q)$ for any $i$. Let  $p=\Pi_{k\to\omega}p_k$ and $q=\Pi_{k\to\omega}q_k$. Define $c(i,k)=\min\{cyc_i(p_k),cyc_i(q_k)\}$. Then:
\[\lim_{k\to\omega}\frac{cyc_i(p_k)}{n_k}=\lim_{k\to\omega}\frac{cyc_i(q_k)}{n_k}=\lim_{k\to\omega}\frac{c(i,k)}{n_k}.\]
For each $k\in\nz$ divide the cycles of $p_k$ into two groups: let $P(k)$ be a collection of cycles of $p_k$ that contains exactly $c(i,k)/i$ cycles of length $i$ and let $E(k)$ be the collection of the remaining cycles. Denote by $e(k)$ the cardinality of $E(k)$.
We want to show that $\lim_{k\to\omega}{e(k)}/{n_k}=0$. Let $e(i,k)$ be the number of cycles of length $i$ in $E(k)$. We know that $\lim_{k\to\omega}e(i,k)/n_k=0$ for any $i\in\nz^*$. Then for any $T\in\nz^*$:
\[\lim_{k\to\omega}\frac {e(k)}{n_k}=\lim_{k\to\omega}\sum_{i<T}\frac {e(i,k)}{n_k}+\lim_{k\to\omega}\sum_{i\geqslant T}\frac {e(i,k)}{n_k}\leqslant\frac 1T.\]
As $T$ was arbitrary we get that $\lim_{k\to\omega}{e(k)}/{n_k}=0$. Then we can glue the cycles in $E(k)$ into one large cycle without changing the value of $p=\Pi_{k\to\omega}p_k$. Perform the same operations with $q_k$. Now $p_k$ and $q_k$ are conjugate for any $k$ so $p$ and $q$ are conjugate.
\end{proof}

\section{Solving ultraproducts of cycles}

The first step in the proof of the main result is to deal with ultraproducts of permutations composed of only one cycle. Define:
\[cyc(\{1,\infty\}):=\{p\in\Pi_{k\to\omega}S_{n_k}:cyc_i(p)=0\ \forall i\ 2\leqslant i<\infty\}.\]
From the definition we can deduce that $p\in cyc(\{1,\infty\})$ if and only if $cyc_1(p)+cyc_\infty(p)=1$. Using Proposition \ref{conjugacyclasses} it follows that the conjugacy class of each of these elements is completely determined by $cyc_\infty(p)$.

We have an automorphism, an object that preserves the group structure, and we want preservation of conjugacy classes. For the proof we need results describing conjugacy classes using only properties of the group structure. A first such result is the next proposition.

\begin{p}
Let $p\in\Pi_{k\to\omega}S_{n_k}$. Then $p\in cyc(\{1,\infty\})$ if and only if $p^m\in Cl(p)$ for any $m\in\nz^*$.
\end{p}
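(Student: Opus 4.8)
The plan is to reduce both implications to the elementary formula $cyc_1(p^i)=\sum_{j\mid i}cyc_j(p)$ recorded in the introduction, together with the conjugacy criterion of Proposition \ref{conjugacyclasses}. The only facts I will use are that each $cyc_j(p)$ is a well-defined nonnegative real, that this divisor-sum formula holds in $\Pi_{k\to\omega}S_{n_k}$, and that $p$ and $q$ are conjugate exactly when $cyc_i(p)=cyc_i(q)$ for every $i\in\nz^*$.

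For the implication ``$p^m\in Cl(p)$ for all $m$'' $\Rightarrow$ ``$p\in cyc(\{1,\infty\})$'' I will argue by contraposition. Suppose $p\notin cyc(\{1,\infty\})$, so the set $\{j\geqslant 2:cyc_j(p)>0\}$ is nonempty; let $i_0$ be its least element. Applying the divisor-sum formula with exponent $i_0$, every divisor $j$ of $i_0$ with $2\leqslant j<i_0$ has $cyc_j(p)=0$ by minimality of $i_0$, so $cyc_1(p^{i_0})=cyc_1(p)+cyc_{i_0}(p)>cyc_1(p)$. By Proposition \ref{conjugacyclasses} this forces $p^{i_0}\notin Cl(p)$, a contradiction. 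Geometrically this is just the fact that raising a genuine cycle of finite length $i_0$ to the power $i_0$ produces new fixed points, and conjugation cannot change the number of fixed points.

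For the converse, assume $p\in cyc(\{1,\infty\})$, i.e. $cyc_j(p)=0$ for every $2\leqslant j<\infty$, and fix $m\in\nz^*$. For any $i\in\nz^*$ the divisor-sum formula applied to $p$ gives $cyc_1(p^{mi})=\sum_{j\mid mi}cyc_j(p)=cyc_1(p)$, since every divisor $j\geqslant 2$ of $mi$ is finite and hence contributes $0$. On the other hand, applied to $p^m$ it gives $cyc_1((p^m)^i)=\sum_{j\mid i}cyc_j(p^m)$, so $\sum_{j\mid i}cyc_j(p^m)=cyc_1(p)$ for every $i$. Taking $i=1$ yields $cyc_1(p^m)=cyc_1(p)$, and then for each $j\geqslant 2$ the choice $i=j$ gives $\sum_{j'\mid j,\,j'\geqslant 2}cyc_{j'}(p^m)=0$, whence $cyc_j(p^m)=0$ since all these terms are nonnegative. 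Thus $cyc_i(p^m)=cyc_i(p)$ for every $i\in\nz^*$, and Proposition \ref{conjugacyclasses} gives $p^m\in Cl(p)$. The underlying picture is that a power of a long cycle breaks into cycles that are still long, so no new short cycles or fixed points appear.

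The argument is essentially bookkeeping; the one point that needs care is the interplay between the two applications of the divisor-sum formula — to $p^{mi}$ on one side and to $(p^m)^i$ on the other — and the observation that ``infinite'' mass never contributes to any $cyc_j$ with $j$ finite. An alternative, more hands-on proof of the converse would instead choose representatives $p_k$ of $p$ with no nontrivial short cycles, which is possible by the gluing trick used in the proof of Proposition \ref{conjugacyclasses}, and then compute the cycle types of $p_k^m$ directly; I find the algebraic route cleaner and will follow it.
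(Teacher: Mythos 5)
Your proof is correct. The forward implication is the same computation as the paper's: the paper argues directly that $\sum_{j\mid m}cyc_j(p)=cyc_1(p)$ forces $cyc_m(p)=0$ by nonnegativity, while you take the contrapositive with a minimal bad index $i_0$, but the content is identical. The converse, however, follows a genuinely different route. The paper deduces $cyc_i(p^m)=0$ for $2\leqslant i<\infty$ from the identity $cyc_\infty(p^m)=cyc_\infty(p)$ combined with the constraint $cyc_1(v)+cyc_\infty(v)\leqslant 1$; that identity is asserted there with ``in general, we have'' and is not proved, though it is true (it needs the observation that a cycle of length $L$ raised to the $m$-th power splits into at most $m$ cycles, each of length at least $L/m$, so long cycles stay long and short cycles stay short). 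You bypass $cyc_\infty$ entirely: applying the divisor-sum formula once to the base $p$ at exponent $mi$ and once to the base $p^m$ at exponent $i$, and using $(p^m)^i=p^{mi}$, yields $\sum_{j\mid i}cyc_j(p^m)=cyc_1(p)$ for every $i$, from which $cyc_1(p^m)=cyc_1(p)$ and all $cyc_j(p^m)$ with $2\leqslant j<\infty$ vanish by nonnegativity; Proposition \ref{conjugacyclasses} then applies since it only involves the finite indices. Your version is more self-contained, resting solely on the one formula the paper has already recorded; what the paper's route buys instead is the independently useful fact $cyc_\infty(p^m)=cyc_\infty(p)$, which your argument neither needs nor establishes.
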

\begin{proof}
We already mentioned the formula: $cyc_1(p^i)=\sum_{j|i}cyc_j(p)$.

If  $p^m\in Cl(p)$ for any $m\in\nz^*$ then by Proposition \ref{conjugacyclasses} we have $cyc_1(p^m)=cyc_1(p)$ for all $m\in\nz^*$. Using the above formula we get: $\sum_{j|m}cyc_j(p)=cyc_1(p)$. This implies that $cyc_m(p)=0$ for all $m\geqslant 2$. So $p\in cyc(\{1,\infty\})$ by definition.

Let now $m\in\nz^*$. If $p\in cyc(\{1,\infty\})$ then  $cyc_1(p^m)=\sum_{j|m}cyc_j(p)=cyc_1(p)$. In general, we have $cyc_\infty(p^m)=cyc_\infty(p)$. As $cyc_1(p)+cyc_\infty(p)=1$ it follows that  $cyc_1(p^m)+cyc_\infty(p^m)=1$. This, in turn, implies that $cyc_i(p^m)=0=cyc_i(p)$ for $2\leqslant i<\infty$. It follows that $p^m\in Cl(p)$.
\end{proof}

\begin{cor}\label{preserving 1infty}
For an automorhpism $\Phi$ of $\Pi_{k\to\omega}S_{n_k}$ we have $\Phi(cyc(\{1,\infty\}))=cyc(\{1,\infty\})$.
\end{cor}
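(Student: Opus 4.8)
The plan is to derive Corollary \ref{preserving 1infty} directly from the preceding proposition, which gives a purely group-theoretic characterization of the set $cyc(\{1,\infty\})$: an element $p$ lies in $cyc(\{1,\infty\})$ if and only if $p^m\in Cl(p)$ for every $m\in\nz^*$. Since an automorphism $\Phi$ preserves the group operation, it sends $p^m$ to $\Phi(p)^m$; and since $\Phi$ is in particular a bijection that maps conjugate elements to conjugate elements (because $\Phi(gpg^{-1})=\Phi(g)\Phi(p)\Phi(g)^{-1}$), we have $p^m\in Cl(p)$ if and only if $\Phi(p)^m\in Cl(\Phi(p))$.

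So the argument runs as follows. First I would fix an automorphism $\Phi$ and take $p\in cyc(\{1,\infty\})$. By the proposition, $p^m\in Cl(p)$ for all $m\in\nz^*$. Applying $\Phi$ and using multiplicativity together with the fact that $\Phi$ maps each conjugacy class onto a conjugacy class, I get $\Phi(p)^m=\Phi(p^m)\in \Phi(Cl(p))=Cl(\Phi(p))$ for all $m$. Then the proposition, applied in the reverse direction to the element $\Phi(p)$, yields $\Phi(p)\in cyc(\{1,\infty\})$. This shows $\Phi(cyc(\{1,\infty\}))\subseteq cyc(\{1,\infty\})$. Running the same argument with $\Phi^{-1}$ in place of $\Phi$ gives the reverse inclusion $cyc(\{1,\infty\})=\Phi(\Phi^{-1}(cyc(\{1,\infty\})))\subseteq\Phi(cyc(\{1,\infty\}))$, hence equality.

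There is really no serious obstacle here: the content is entirely in the previous proposition, and the corollary is the standard observation that a group-theoretically defined subset is automatically invariant under automorphisms, once one notes that "$p^m\in Cl(p)$" is expressible using only the group structure. The one small point worth stating explicitly is that $\Phi$ carries conjugacy classes to conjugacy classes, which is immediate from $\Phi(gpg^{-1})=\Phi(g)\Phi(p)\Phi(g)^{-1}$ and the surjectivity of $\Phi$; I would include that line so the deduction is self-contained.
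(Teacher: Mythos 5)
Your proof is correct and follows essentially the same route as the paper: both reduce the corollary to the group-theoretic characterization $p\in cyc(\{1,\infty\})\Leftrightarrow p^m\in Cl(p)$ for all $m$, combined with the fact that an automorphism sends conjugacy classes to conjugacy classes. You merely spell out the two inclusions (using $\Phi^{-1}$ for the reverse one) where the paper states the equivalence directly.
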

\begin{proof}
It is well known and easily verifiable that an automorphism sends a conjugacy class into a conjugacy class, i.e. $\Phi(Cl(p))=Cl(\Phi(p))$. 
If follows that  $p^m\in Cl(p)$ iff  $\Phi(p)^m\in Cl(\Phi(p))$. By the previous proposition  $p\in cyc(\{1,\infty\})$ iff  $\Phi(p)\in cyc(\{1,\infty\})$.
\end{proof}

The triangle inequality is an important tool in this section.

\begin{p}
For $p,q\in\Pi_{k\to\omega}S_{n_k}$ we have the following inequality:
\[1-cyc_1(pq)\leqslant (1-cyc_1(p))+(1-cyc_1(q)).\]
\end{p}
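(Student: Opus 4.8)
The plan is to reduce immediately to the finite symmetric groups $S_{n_k}$, where the inequality becomes the elementary observation that a point moved by a product of two permutations must be moved by at least one of the factors. Concretely, fix representatives $p=\Pi_{k\to\omega}p_k$ and $q=\Pi_{k\to\omega}q_k$ with $p_k,q_k\in S_{n_k}$, and recall from the setup that $cyc_1(p)=\lim_{k\to\omega}cyc_1(p_k)/n_k$, $cyc_1(q)=\lim_{k\to\omega}cyc_1(q_k)/n_k$, and $cyc_1(pq)=\lim_{k\to\omega}cyc_1(p_kq_k)/n_k$, so everything reduces to a statement about the sequences of finite permutations.

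First I would work at a fixed index $k$. For any $a\in\{1,\dots,n_k\}$, if $p_k(a)=a$ and $q_k(a)=a$ then $(p_kq_k)(a)=p_k(q_k(a))=p_k(a)=a$. Taking contrapositives, the set of points moved by $p_kq_k$ is contained in the union of the set of points moved by $p_k$ and the set of points moved by $q_k$, hence
\[n_k-cyc_1(p_kq_k)=|\{a:(p_kq_k)(a)\neq a\}|\leq|\{a:p_k(a)\neq a\}|+|\{a:q_k(a)\neq a\}|=(n_k-cyc_1(p_k))+(n_k-cyc_1(q_k)).\]
Dividing by $n_k$ and passing to the limit along $\omega$ — which preserves non-strict inequalities — yields exactly $1-cyc_1(pq)\leq(1-cyc_1(p))+(1-cyc_1(q))$.

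Equivalently, and perhaps more in the spirit of the remark preceding the statement, one can phrase the whole argument as the triangle inequality for the normalized Hamming distance, which is bi-invariant on each $S_{n_k}$ and therefore well defined on the ultraproduct: using $1-cyc_1(r)=d_{Hamm}(r,Id)$, right-invariance gives $d_{Hamm}(pq,Id)\leq d_{Hamm}(pq,q)+d_{Hamm}(q,Id)=d_{Hamm}(p,Id)+d_{Hamm}(q,Id)$. I do not expect a genuine obstacle here; the only points requiring care are that $cyc_1$ is indeed computed as the stated ultralimit and that the finite inequality survives the passage to the limit, both of which are immediate from the material recalled in the previous subsections.
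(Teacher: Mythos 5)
Your proof is correct, and your second paragraph is precisely the paper's argument: the triangle inequality for the bi-invariant Hamming distance together with the identity $d_{Hamm}(r,Id)=1-cyc_1(r)$. The first, pointwise-counting version is just this same argument unwound at each finite level, so there is nothing genuinely different here.
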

\begin{proof}
By the triangle inequality of the Hamming distance we get that: $d_{Hamm}(pq,Id)\leqslant d_{Hamm}(pq,q)+d_{Hamm}(q,Id)$. The propostion follows because of the bi-invaraince, i.e. $d_{Hamm}(pq,q)=d_{Hamm}(p,Id)$ and from the formula: $d_{Hamm}(p,Id)=1-cyc_1(p)$.
\end{proof}

For the proof of the next proposition we need the following notation: $c_{s,t}^n\in S_n$ is defined by the cycle $c_{s,t}^n=(s,s+1,\ldots, t)$.

\begin{p}\label{inftycycles}
Let $p,q\in cyc(\{1,\infty\})$ and $m>1$. Then:
\[q\in Cl(p)^m\Leftrightarrow cyc_\infty(q)\leqslant m\cdot cyc_\infty(p).\]
\end{p}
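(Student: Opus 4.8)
The plan is to prove both implications by working at the level of representatives in $\Pi_k S_{n_k}$ and using the combinatorial characterization of conjugacy classes from Proposition~\ref{conjugacyclasses}. Write $a = cyc_\infty(p)$ and $b = cyc_\infty(q)$; since $p,q \in cyc(\{1,\infty\})$, these control everything, and the target is $b \leqslant ma$.

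\emph{($\Rightarrow$)} Suppose $q = g_1 \cdots g_m$ with each $g_j \in Cl(p)$. Each $g_j$ has $cyc_1(g_j) = 1 - a$, i.e. $d_{Hamm}(g_j, Id) = a$. By iterating the triangle inequality from the proposition just above, $d_{Hamm}(q, Id) \leqslant m \cdot a$, so $1 - cyc_1(q) \leqslant ma$. But $q \in cyc(\{1,\infty\})$ means $cyc_1(q) + cyc_\infty(q) = 1$, hence $b = 1 - cyc_1(q) \leqslant ma$. This direction is essentially immediate from the triangle inequality.

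\emph{($\Leftarrow$)} This is the substantive direction and the main obstacle. Given $b \leqslant ma$, I want to realize $q$ as a product of $m$ conjugates of $p$. I would work fiberwise: choose representatives $p = \Pi_{k\to\omega} p_k$, $q = \Pi_{k\to\omega} q_k$ where (using the gluing trick from the proof of Proposition~\ref{conjugacyclasses}) I may assume $p_k$ consists of $cyc_1(p_k)$ fixed points together with a single long cycle of length roughly $a n_k$, and similarly $q_k$ has a single long cycle of length roughly $b n_k$ on top of its fixed points. Now it is a finite-permutation statement: on $S_{n_k}$, one wants to write the single-cycle-plus-fixed-points permutation $q_k$ as a product of $m$ permutations, each conjugate to $p_k$ (single cycle of length $\approx a n_k$ plus fixed points), with the approximation error going to $0$ along $\omega$. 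The key combinatorial fact is that composing $m$ disjointly-supported long cycles of total length $L$ can be arranged to produce a single cycle of length close to $L$ (the classical fact that a product of transpositions/cycles realizes large cycles once the supports are suitably chained), and conversely one long cycle can be "spread out" across $m$ conjugate long cycles whose supports overlap in a controlled way. Concretely, using the notation $c_{s,t}^n$ for the cycle $(s,s+1,\ldots,t)$, one exhibits explicit $p_k$-conjugates $h_1^{(k)},\ldots,h_m^{(k)}$ — each a cycle $c_{s_j,t_j}^{n_k}$ with $t_j - s_j \approx a n_k$ and consecutive intervals overlapping in a single point — whose product is a single cycle of length $\approx m a n_k$ on the union of the intervals; if $b \leqslant ma$, one shrinks the overlaps/lengths so that the resulting single cycle has length exactly (a value tending to) $b n_k$, and the leftover points are fixed. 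Passing to the ultraproduct, $\Pi_{k\to\omega}(h_1^{(k)}\cdots h_m^{(k)})$ has $cyc_\infty = b$ and all other $cyc_i = 0$, hence by Proposition~\ref{conjugacyclasses} it is conjugate to $q$; absorbing that final conjugation into, say, $h_1^{(k)}$ (which keeps it in $Cl(p)$) gives $q \in Cl(p)^m$.

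The delicate points to get right are: (i) handling the case $ma \geqslant 1$, where one cannot fit $m$ disjoint intervals of length $a n_k$ inside $\{1,\ldots,n_k\}$, so the conjugates must be chosen with genuinely overlapping supports and one must verify the product is still a single long cycle of the prescribed length — here a direct computation with $c_{s,t}^n$ cycles on nested/overlapping intervals does the job; and (ii) ensuring the cycle structure of the product is \emph{exactly} one long cycle plus fixed points with no spurious short cycles of bounded length appearing with positive density, which again is checked by the explicit construction. Once the fiberwise construction is in place, the ultraproduct passage and the appeal to Proposition~\ref{conjugacyclasses} are routine.
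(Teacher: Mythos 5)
Your forward implication is exactly the paper's: the triangle inequality for $d_{Hamm}$ plus the identity $1-cyc_1(v)=cyc_\infty(v)$ on $cyc(\{1,\infty\})$. The gap is in the reverse implication. Write $a=cyc_\infty(p)$, $b=cyc_\infty(q)$. The hypothesis $b\leqslant ma$ also allows $b<a$ (e.g.\ $q$ of small support, $p$ of large support), and there your chained-interval construction cannot work: each factor $c_{s_j,t_j}^{n_k}$ is a cycle on an interval of length $\approx an_k$, the union of the $m$ intervals has size at least $an_k$ however much they overlap, and the product of such chained increasing cycles moves every point of that union with at most $m$ exceptions, so its support has normalized size at least $a>b$. ``Shrinking the overlaps/lengths'' cannot take you below $a$: you may not shrink the lengths (the factors must stay in $Cl(p)$), and maximal overlap (all intervals equal) gives $c^m$, still of support $\approx an_k$. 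The paper closes this case with a genuinely different construction: it uses conjugates of $p$ that are each a product of \emph{two} long cycles, $c_{1,r_k}^{n_k}c_{r_k+1,j_k}^{n_k}$ with $r_k/n_k\to b$ and $j_k/n_k\to a$ --- still in $Cl(p)$, because conjugacy in the ultraproduct only sees the numbers $cyc_i$ and a bounded number of long cycles registers only in $cyc_\infty$ --- with the last factor carrying $(c_{r_k+1,j_k}^{n_k})^{-(m-1)}$ so that the second cycles telescope away and the product is $(c_{1,r_k}^{n_k})^m\in Cl(q)$. You need an argument of this kind (or a fiberwise appeal to Theorem~\ref{productofcycles}) for the case $b<a$; without it the proof is incomplete.

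A smaller point: in the case $a\leqslant b\leqslant ma$ your requirement (ii) that the product be \emph{exactly} one long cycle is both hard to arrange --- $(1,2,3)(2,3,4)$ is a product of two transpositions under either composition convention, so overlapping interval cycles can split into several cycles --- and unnecessary. The paper only observes that the product has at most $m$ nontrivial cycles on $\{1,\dots,r_k\}$, and $m$ cycles, a number independent of $k$, contribute nothing to any $cyc_i$ with $i<\infty$ in the ultralimit; this already places the product in $Cl(q)$, after which absorbing the conjugation into one factor works as you say.
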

\begin{proof}
Let $p_i\in Cl(p)$ such that  $q=p_1p_2\ldots p_m$. By the triangle inequality $1-cyc_1(p_1p_2\ldots p_m)\leqslant\sum_{i=1}^m(1-cyc_1(p_i))$. Because  $cyc_1(p_i)=cyc_1(p)$ we get that
$1-cyc_1(q)\leqslant  m\cdot(1-cyc_1(p))$. For any $v\in cyc(\{1,\infty\})$ we have $1-cyc_1(v)=cyc_\infty(v)$ and we are done with the direct implication.

For the reverse implication, assume first that $cyc_\infty(p)<cyc_\infty(q)$. We have to arrange $m$ cycles of normalized length $cyc_\infty(p)$ such that they occupy a space of length $cyc_\infty(q)$. Find numbers $j_k,r_k\in\nz$ such that $1<j_k<r_k<n_k$, $r_k<mj_k$ and $\lim_{k\to\omega}j_k/n_k=cyc_\infty(p)$,  $\lim_{k\to\omega}r_k/n_k=cyc_\infty(q)$. For each $k\in\nz$ find numbers $1=a_k^1\leqslant a_k^2\leqslant\ldots\leqslant a_k^m$ such that $a_k^t+j_k\geqslant a_k^{t+1}$ and $a_k^m+j_k=r_k$. Define now $p_t=\Pi_{k\to\omega}c_{a_k^t,a_k^t+j_k}^{n_k}$ for $t=1,\ldots,m$. It follows that $p_i\in Cl(p)$. 

Let $s^k=c_{a_k^1,a_k^1+j_k}^{n_k}\ldots c_{a_k^m,a_k^m+j_k}^{n_k}$ so that $p_1\ldots p_m=\Pi_{k\to\omega}s^k$. Then $s^k(a)=a$ if $a>r_k$ and  $s^k(a)>a$ if $a\leqslant r_k$ with maximum $m$ exceptions. Then $s^k$ has maximum $m$ cycles on $\{1,\ldots,r_k\}$. These $m$ cycles are not enough to add some value to $cyc_i(p_1\ldots p_m)$ for any $i\in\nz^*$. It follows that $cyc_\infty(p_1\ldots p_m)=cyc_\infty(q)$ so $p_1p_2\ldots p_m\in Cl(q)$. This is enough to deduce that $q\in Cl(p)^m$.

We are left with the case $cyc_\infty(p)\geqslant cyc_\infty(q)$. We construct $m$ elements composed of two cycles: one cycle of normalized length $cyc_\infty(q)$ and one of normalized length $cyc_\infty(p)- cyc_\infty(q)$. We want to keep the first cycles and to cancel out the the second ones. Find numbers $j_k,r_k\in\nz$ such that $1<r_k<j_k<n_k$ and $\lim_{k\to\omega}j_k/n_k=cyc_\infty(p)$,  $\lim_{k\to\omega}r_k/n_k=cyc_\infty(q)$. Let  $p_t=\Pi_{k\to\omega}c_{1,r_k}^{n_k}c_{r_k+1,j_k}^{n_k}$ for $t=1,\ldots,m-1$ and $p_m=\Pi_{k\to\omega}c_{1,r_k}^{n_k}(c_{r_k+1,j_k}^{n_k})^{-(m-1)}$. It follows that $p_t\in Cl(p)$ and $p_1\ldots p_m=\Pi_{k\to\omega}(c_{1,r_k}^{n_k})^m\in Cl(q)$. This will finish the proof.
\end{proof}

Define $cyc(\{\infty\})$ the set composed of elements $p\in\Pi_{k\to\omega}S_{n_k}$ such that $cyc_\infty(p)=1$. This is actually a conjugacy class.

\begin{te}
Let $p\in cyc(\{1,\infty\})$ and $m>1$. Then the following are equivalent:
\begin{enumerate}
\item $cyc(\{\infty\})\subset Cl(p)^m$;
\item $cyc_\infty(p)\geqslant\frac 1m$;
\item $cyc(\{1,\infty\})\subset Cl(p)^m$.
\end{enumerate}
\end{te}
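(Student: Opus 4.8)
The plan is to read off all three equivalences from Proposition \ref{inftycycles}; no new tool is needed beyond that proposition together with two trivial remarks about the set $cyc(\{\infty\})$. I would prove the implications in the cyclic order $(1)\Rightarrow(2)\Rightarrow(3)\Rightarrow(1)$.

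For $(1)\Rightarrow(2)$, note first that $cyc(\{\infty\})$ is nonempty: for instance $\Pi_{k\to\omega}c_{1,n_k}^{n_k}$ lies in it. Fix any $q\in cyc(\{\infty\})$; then $q\in cyc(\{1,\infty\})$ and $cyc_\infty(q)=1$, while the hypothesis $cyc(\{\infty\})\subset Cl(p)^m$ gives $q\in Cl(p)^m$. Since $p,q\in cyc(\{1,\infty\})$ and $m>1$, Proposition \ref{inftycycles} applies and yields $1=cyc_\infty(q)\leqslant m\cdot cyc_\infty(p)$, that is, $cyc_\infty(p)\geqslant\frac1m$.

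For $(2)\Rightarrow(3)$, take an arbitrary $q\in cyc(\{1,\infty\})$. Then $cyc_\infty(q)\leqslant 1\leqslant m\cdot cyc_\infty(p)$, so Proposition \ref{inftycycles} gives $q\in Cl(p)^m$; as $q$ was arbitrary, $cyc(\{1,\infty\})\subset Cl(p)^m$. Finally $(3)\Rightarrow(1)$ is merely the inclusion $cyc(\{\infty\})\subset cyc(\{1,\infty\})$, which holds because $cyc_\infty(q)=1$ forces $\sum_{i\geqslant 1}cyc_i(q)=0$, hence $cyc_i(q)=0$ for all $2\leqslant i<\infty$. I do not expect any real obstacle: the substance is already contained in Proposition \ref{inftycycles}, and the only points to verify — that $cyc(\{\infty\})$ is nonempty and that $cyc(\{\infty\})\subset cyc(\{1,\infty\})$ — are immediate. (Alternatively one could bypass nonemptiness by proving $(1)\Leftrightarrow(2)$ directly in the same one-line fashion, but the cyclic argument above is cleaner.)
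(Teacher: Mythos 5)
Your proof is correct and follows essentially the same route as the paper: $(1)\Rightarrow(2)$ and $(2)\Rightarrow(3)$ are both read off as special cases of Proposition \ref{inftycycles} (with $cyc_\infty(q)=1$ and $m\cdot cyc_\infty(p)\geqslant 1$ respectively), and $(3)\Rightarrow(1)$ is the trivial inclusion. The extra remarks you verify (nonemptiness of $cyc(\{\infty\})$ and the inclusion $cyc(\{\infty\})\subset cyc(\{1,\infty\})$) are left implicit in the paper but are indeed immediate.
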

\begin{proof} The implication $(1)\Rightarrow(2)$ is a particular case of the previous proposition when $cyc_\infty(q)=1$. Also $(2)\Rightarrow(3)$ is a particular case of the same proposition
when $m\cdot cyc_\infty(p)\geqslant 1$. The implication  $(3)\Rightarrow(1)$ is trivial.
\end{proof}

\begin{cor}
For an automorhpism $\Phi$ of $\Pi_{k\to\omega}S_{n_k}$ and $p\in cyc_\infty(\{1,\infty\})$ we have:
\[ cyc_\infty(p)\in\big[\frac1m,\frac1{m-1}\big)\Leftrightarrow cyc_\infty(\Phi(p))\in\big[\frac1m,\frac1{m-1}\big).\]
\end{cor}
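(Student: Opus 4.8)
The plan is to follow how $\Phi$ acts on the conjugacy classes contained in $cyc(\{1,\infty\})$. By Corollary~\ref{preserving 1infty} the automorphism $\Phi$ maps $cyc(\{1,\infty\})$ onto itself, and since an automorphism carries conjugacy classes to conjugacy classes, $\Phi$ permutes the conjugacy classes lying inside $cyc(\{1,\infty\})$. By Proposition~\ref{conjugacyclasses}, and since $cyc_1(p)+cyc_\infty(p)=1$ for $p\in cyc(\{1,\infty\})$, each such class is determined by the single number $cyc_\infty\in[0,1]$, and every value in $[0,1]$ occurs (take one long cycle on a block of the appropriate relative size). Hence $\Phi$ induces a bijection $f\colon[0,1]\to[0,1]$ with $cyc_\infty(\Phi(p))=f(cyc_\infty(p))$ for all $p\in cyc(\{1,\infty\})$, and the claim is exactly that $cyc_\infty(p)\in[\tfrac1m,\tfrac1{m-1})\Leftrightarrow f(cyc_\infty(p))\in[\tfrac1m,\tfrac1{m-1})$ for every $m>1$.

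First I would show that being above a threshold $\tfrac1m$ is preserved. For $p\in cyc(\{1,\infty\})$ and $m>1$, the Theorem above gives
\[cyc_\infty(p)\geqslant\tfrac1m\ \Longleftrightarrow\ cyc(\{1,\infty\})\subseteq Cl(p)^m .\]
Now $\Phi(Cl(p)^m)=Cl(\Phi(p))^m$ and $\Phi(cyc(\{1,\infty\}))=cyc(\{1,\infty\})$, so the inclusion on the right holds for $p$ iff it holds for $\Phi(p)$; applying the Theorem again to $\Phi(p)\in cyc(\{1,\infty\})$ gives
\[cyc_\infty(p)\geqslant\tfrac1m\ \Longleftrightarrow\ f(cyc_\infty(p))\geqslant\tfrac1m\qquad(m>1).\]
For $m\geqslant3$ this already settles the corollary, since $x\in[\tfrac1m,\tfrac1{m-1})$ is the conjunction of $x\geqslant\tfrac1m$ and $x<\tfrac1{m-1}$, and both thresholds $\tfrac1m,\tfrac1{m-1}$ have denominator $>1$.

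The remaining case is $m=2$, i.e. the right endpoint: I must rule out that $f$ sends the value $1$ into $[\tfrac12,1)$, which amounts to $\Phi(cyc(\{\infty\}))=cyc(\{\infty\})$, i.e. $f(1)=1$. Threshold preservation only yields $f(1)\geqslant\tfrac12$, so here I would bring in Proposition~\ref{inftycycles}: for $p\in cyc(\{1,\infty\})$ with $x=cyc_\infty(p)$ and $m>1$, the conjugation-invariant set $Cl(p)^m\cap cyc(\{1,\infty\})$ is precisely the union of those conjugacy classes whose parameter lies in $[0,\min(1,mx)]$. Since $\Phi$ is an automorphism with $\Phi(Cl(p)^m)=Cl(\Phi(p))^m$ and $\Phi(cyc(\{1,\infty\}))=cyc(\{1,\infty\})$, it sends this set onto the corresponding one for $\Phi(p)$, whence
\[f\big([0,\min(1,mx)]\big)=\big[0,\min(1,m\,f(x))\big]\qquad(x\in[0,1],\ m>1).\]
Taking $m=2$ and $x=a/2$ for an arbitrary $a\in[0,1]$ gives $f([0,a])=[0,\min(1,2f(a/2))]$, so $f$ carries every initial segment of $[0,1]$ onto an initial segment. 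A bijection of $[0,1]$ with this property is strictly increasing: writing $f([0,a])=[0,c(a)]$, the function $c$ is nondecreasing, and $f(a)=c(a)$ (otherwise some $[0,a^\ast]$ with $a^\ast<a$ would have the same image as $[0,a]$, contradicting injectivity), so $f=c$ is nondecreasing and hence strictly increasing. In particular $f(0)=0$ and $f(1)=1$.

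Combining: $f$ is a strictly increasing bijection of $[0,1]$ with $f(1)=1$ satisfying $cyc_\infty(p)\geqslant\tfrac1m\Leftrightarrow f(cyc_\infty(p))\geqslant\tfrac1m$ for all $m>1$. For $m\geqslant3$ both defining inequalities of $[\tfrac1m,\tfrac1{m-1})$ transfer through $f$; for $m=2$ the inequality $cyc_\infty(p)\geqslant\tfrac12$ transfers by threshold preservation, and $cyc_\infty(p)<1$ transfers because $f$ is injective with $f(1)=1$ (and $f(x)\leqslant1$ always). This gives the stated equivalence. I expect the genuine difficulty to be the $m=2$ endpoint, i.e. establishing $f(1)=1$: the group-theoretic characterizations give threshold preservation immediately, but pinning down the top class $cyc(\{\infty\})$ needs the extra rigidity — the initial-segment behaviour of $f$ — extracted from Proposition~\ref{inftycycles}.
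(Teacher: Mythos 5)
Your proposal is correct, and its first half is exactly the paper's argument: Corollary \ref{preserving 1infty} together with the preceding theorem gives the threshold preservation $cyc_\infty(p)\geqslant\frac1m\Leftrightarrow cyc_\infty(\Phi(p))\geqslant\frac1m$ for every $m>1$, which settles each interval $\big[\frac1m,\frac1{m-1}\big)$ with $m\geqslant3$. Where you genuinely go beyond the paper is the case $m=2$. The paper simply asserts that threshold preservation ``is enough to deduce the conclusion,'' but as you observe it is not quite enough there: nothing in the thresholds $\frac1k$, $k\geqslant2$, forbids the induced bijection $f$ on parameters from swapping, say, the classes with parameters $\frac12$ and $1$, which would violate the $m=2$ case. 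Your repair --- using Proposition \ref{inftycycles} to identify $Cl(p)^m\cap cyc(\{1,\infty\})$ with the union of classes indexed by the initial segment $\big[0,\min(1,m\,cyc_\infty(p))\big]$, concluding that $f$ maps initial segments onto initial segments, hence is monotone and fixes $1$ --- is correct in every step, and it is the real content missing from the paper's one-line justification. (A shorter route to the same endpoint: by Proposition \ref{inftycycles}, $cyc(\{\infty\})$ is the unique conjugacy class inside $cyc(\{1,\infty\})$ whose containment in some $Cl(q)^m$ forces $cyc(\{1,\infty\})\subseteq Cl(q)^m$ for that $q$ and $m$; any class with parameter $x<1$ fails this, since one can choose $q$ with $x\leqslant m\,cyc_\infty(q)<1$. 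This characterization is $\Phi$-invariant, so $\Phi(cyc(\{\infty\}))=cyc(\{\infty\})$, i.e.\ $f(1)=1$.) One mitigating remark about the paper: in the only place the corollary is later applied, namely the proof of Theorem \ref{preservingclass1infty}, only large values of $m$ are needed, so the $m=2$ gap does not propagate; but for the corollary as stated your extra argument is needed, and your write-up is a complete proof.
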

\begin{proof}
From Corollary \ref{preserving 1infty} we have  $\Phi(cyc(\{1,\infty\}))=cyc(\{1,\infty\})$. It follows that:
\[cyc(\{1,\infty\})\subset Cl(p)^m\Leftrightarrow cyc(\{1,\infty\})\subset Cl(\Phi(p))^m.\]
Applying the previous theorem twice (actually once in each direction) we get that:
\[cyc_\infty(p)\geqslant\frac 1m\Leftrightarrow cyc_\infty(\Phi(p))\geqslant\frac 1m.\]
This is enough to deduce the conclusion.
\end{proof}

\begin{te}\label{preservingclass1infty}
For an automorhpism $\Phi$ and $p\in cyc(\{1,\infty\})$ we have $Cl(p)=Cl(\Phi(p))$.
\end{te}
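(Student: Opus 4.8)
The plan is to encode the action of $\Phi$ on the conjugacy classes lying inside $cyc(\{1,\infty\})$ as a single self-map of the unit interval, and to show that the scaling relation from Proposition \ref{inftycycles} forces this map to be the identity. By Proposition \ref{conjugacyclasses} the conjugacy classes contained in $cyc(\{1,\infty\})$ are exactly parametrized by the value $cyc_\infty\in[0,1]$, and every value is attained (take an ultraproduct of single cycles supported on about $c\,n_k$ points). So, using Corollary \ref{preserving 1infty} (which gives $\Phi(cyc(\{1,\infty\}))=cyc(\{1,\infty\})$) together with the fact that automorphisms carry conjugacy classes to conjugacy classes, one obtains a well-defined map $f\colon[0,1]\to[0,1]$ characterized by $f(cyc_\infty(p))=cyc_\infty(\Phi(p))$ for $p\in cyc(\{1,\infty\})$; it is a bijection, its inverse being the analogous map built from $\Phi^{-1}$.

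The first key step is to push Proposition \ref{inftycycles} through $\Phi$. For $p,q\in cyc(\{1,\infty\})$ and $m>1$ we have $q\in Cl(p)^m$ iff $\Phi(q)\in\Phi(Cl(p))^m=Cl(\Phi(p))^m$, and since $\Phi(p)$ and $\Phi(q)$ again lie in $cyc(\{1,\infty\})$ we may invoke Proposition \ref{inftycycles} on both sides to get
\[
cyc_\infty(q)\le m\cdot cyc_\infty(p)\ \Longleftrightarrow\ cyc_\infty(\Phi(q))\le m\cdot cyc_\infty(\Phi(p)).
\]
In terms of $f$ this reads: for all $s,t\in[0,1]$ and all integers $m\ge 2$, $\ s\le mt\iff f(s)\le m\,f(t)$.

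From this one relation I would extract everything. Taking $m=2$ shows that for every $a\in[0,1]$ the set $\{s\in[0,1]:f(s)\le 2f(a/2)\}$ equals $[0,a]$, so $f$ carries each initial segment $[0,a]$ onto an initial segment; being also injective, $f$ is then strictly increasing, hence continuous, with $f(0)=0$ and $f(1)=1$. Next, comparing a $q$ with $cyc_\infty(q)=mt$ (when $mt\le1$) against slightly larger values shows $f(mt)\le m\,f(t)$ while $f>m\,f(t)$ just above $mt$, so by continuity $f(mt)=m\,f(t)$ for every integer $m\ge2$ and every $t\le 1/m$; letting $t\to 1/m$ gives $f(1/m)=1/m$, and hence $f(j/m)=j\,f(1/m)=j/m$ for $1\le j\le m$. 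Thus $f$ is the identity on $\mathbb{Q}\cap[0,1]$ and, being continuous, $f=\mathrm{id}$. Consequently $cyc_\infty(\Phi(p))=cyc_\infty(p)$ for every $p\in cyc(\{1,\infty\})$; since for such $p$ all the other invariants $cyc_i$, $2\le i<\infty$, vanish for both $p$ and $\Phi(p)$, Proposition \ref{conjugacyclasses} yields $Cl(\Phi(p))=Cl(p)$.

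The step I expect to be the main obstacle is exactly this passage from the previous corollary's coarse conclusion — that $cyc_\infty(p)$ and $cyc_\infty(\Phi(p))$ fall in the same block $[\frac1m,\frac1{m-1})$ — to the exact equality, i.e. deducing the rigidity of $f$ from the mere order equivalence $s\le mt\iff f(s)\le m\,f(t)$. Establishing monotonicity of $f$ is the heart of it (the value $f(1)=1$, and the continuity needed to reach it, both rely on it), and one must bear in mind that Proposition \ref{inftycycles} only describes $Cl(p)^m\cap cyc(\{1,\infty\})$; but that is precisely the slice of the group on which $f$ is defined, so no information is lost.
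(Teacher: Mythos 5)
Your argument is correct, and it rests on the same engine as the paper's proof, namely Proposition \ref{inftycycles} transported through $\Phi$ via $q\in Cl(p)^m\Leftrightarrow \Phi(q)\in Cl(\Phi(p))^m$. Where you diverge is in how you extract rigidity from the resulting order relation $s\leqslant mt\Leftrightarrow f(s)\leqslant m f(t)$. The paper specializes to test elements $q$ with $cyc_\infty(q)=1/m$, uses the preceding corollary to pin $cyc_\infty(\Phi(q))$ into the block $[\frac1m,\frac1{m-1})$, derives the implication $cyc_\infty(p)\leqslant j/m\Rightarrow cyc_\infty(\Phi(p))\leqslant j/(m-1)$, and finishes with a squeeze between consecutive fractions. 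You instead bypass that corollary entirely: the $m=2$ instance already shows $f$ maps initial segments onto initial segments, hence (being a bijection of $[0,1]$, with inverse induced by $\Phi^{-1}$) is strictly increasing and therefore continuous; the exact functional equation $f(mt)=mf(t)$ then follows by a one-sided limit, giving $f=\mathrm{id}$ on the rationals and hence everywhere. This is a clean and slightly more self-contained route --- it renders Corollary 2.6 of the paper unnecessary for this theorem --- at the cost of having to justify well-definedness, surjectivity, monotonicity and continuity of $f$, all of which you do correctly (well-definedness because equal $cyc_\infty$ forces conjugacy inside $cyc(\{1,\infty\})$ by Proposition \ref{conjugacyclasses}, and membership of $\Phi(p)$ in $cyc(\{1,\infty\})$ by Corollary \ref{preserving 1infty}). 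Both proofs ultimately exploit the same density of $\{j/m\}$ in $[0,1]$.
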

\begin{proof}
The conjugacy class of an element $p$ in $cyc_\infty(\{1,\infty\})$ is completely characterized by $cyc_\infty(p)$. Let $m,j\in\nz^*$ and $q\in  cyc_\infty(\{1,\infty\})$ be such that $cyc_\infty(q)=1/m$. By Proposition \ref{inftycycles} we have $p\in Cl(q)^j\Leftrightarrow cyc_\infty(p)\leqslant j/m$ and $\Phi(p)\in Cl(\Phi(q))^j\Leftrightarrow cyc_\infty(\Phi(p))\leqslant j\cdot cyc_\infty(\Phi(q))$ . We know that $p\in Cl(q)^j\Leftrightarrow \Phi(p)\in Cl(\Phi(q))^j$. By the previous corollary we get that $cyc_\infty(\Phi(q))<1/(m-1)$. In the end we have:
\[ cyc_\infty(p)\leqslant j/m\Rightarrow  cyc_\infty(\Phi(p))\leqslant j/(m-1) \ \ \ \forall m,j\in\nz^*.\]
Suppose that $cyc_\infty(p)< cyc_\infty(\Phi(p))$. Then there exists a suficiently large $m\in\nz$ and $j<m$ such that:
\[cyc_\infty(p)<\frac jm<\frac{j+1}m<cyc_\infty(\Phi(p)).\]
Because $j/(m-1)\leqslant (j+1)/m$ we get a contradiction. It follows that $cyc_\infty(p)\geqslant cyc_\infty(\Phi(p))$. Replacing $\Phi$ with $\Phi^{-1}$ we get the reverse inequality.
\end{proof}

\section{A theorem about product of two cycles}

The following theorem and its extension to ultraproducts are crucial for the proof of the main result.

\begin{te}[\cite{HKL} Theorem 7]\label{productofcycles}
Let $\sigma\in S_n$ and let $l_1$, $l_2$ be two natural numbers with $2\leqslant l_2\leqslant l_1\leqslant n$. 
Then there exists $C_1,C_2\in S_n$ cycles of lengths $l_1$ and $l_2$ such that $\sigma=C_1C_2$ if and only if $C_1C_2$ 
is the canonic decomposition of $\sigma$ in cycles or if the following holds:
\begin{enumerate}
\item $l_1+l_2=m(\sigma)+n(\sigma)+2s$, for some $s\in\nz$ and
\item $l_1-l_2\leqslant m(\sigma)-n(\sigma)$.
\end{enumerate}
\end{te}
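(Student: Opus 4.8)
The plan is to prove the two implications of the equivalence separately. On the forward side, one should first dispose of the case $\mathrm{supp}(C_1)\cap\mathrm{supp}(C_2)=\es$: there $\sigma=C_1C_2$ is literally a product of two disjoint cycles, i.e.\ $C_1C_2$ already \emph{is} the canonic cycle decomposition of $\sigma$, the first alternative in the statement, and there is nothing to prove. So from now on assume the two supports meet; put $A=\mathrm{supp}(C_1)\cup\mathrm{supp}(C_2)$, and note that $\mathrm{supp}(\sigma)\subseteq A$ and that $\brackets{C_1,C_2}$ acts transitively on $A$.

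For the forward implication in this non-degenerate case I would combine a sign count with an Euler-characteristic argument. The parity half of (1) is immediate: $\mathrm{sgn}(C_1C_2)=(-1)^{l_1+l_2}$ while $\mathrm{sgn}(\sigma)=(-1)^{m(\sigma)-n(\sigma)}$, so $l_1+l_2\equiv m(\sigma)+n(\sigma)\pmod 2$. For the two inequalities I would regard the triple $(C_1,C_2,\sigma^{-1})$ as a bicoloured map (ribbon graph) on $A$: by transitivity it has a single underlying surface, so $c_A(C_1)+c_A(C_2)+c_A(\sigma)=|A|+2-2g$ for some genus $g\geqslant 0$, where $c_A(\cdot)$ counts all cycles on $A$, fixed points included. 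Substituting $c_A(C_1)=|A|-l_1+1$, $c_A(C_2)=|A|-l_2+1$ and $c_A(\sigma)=|A|-m(\sigma)+n(\sigma)$ gives
\[l_1+l_2=2|A|-m(\sigma)+n(\sigma)+2g.\]
Then $|A|\geqslant m(\sigma)$ yields $l_1+l_2\geqslant m(\sigma)+n(\sigma)$, so the even integer $l_1+l_2-m(\sigma)-n(\sigma)$ is $2s$ with $s\geqslant 0$, which is (1); and $|A|\geqslant l_1$ yields $l_1+l_2\geqslant 2l_1-m(\sigma)+n(\sigma)$, i.e.\ $l_1-l_2\leqslant m(\sigma)-n(\sigma)$, which is (2). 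This settles the ``only if'' direction, granting the Euler-characteristic identity, which is classical.

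The substance is the converse, which is a construction. The basic tools are: (i) the elementary factorization of a single cycle, $(1,k+1,\ldots,m)(1,2,\ldots,k)=(1,2,\ldots,m)$, which realizes every split of $m+1$ into two parts that are $\geqslant 2$; and (ii) surgery moves that alter a given factorization $\sigma=C_1C_2$ without changing $\sigma$ — namely splicing a fixed point of $\sigma$ into both cycles at once (multiply $C_1$ and $C_2$ by a single transposition $(x\,y)$ with $y$ common to both supports and $x$ a fixed point, which lengthens each cycle by one), and grafting onto the configuration a cycle $\gamma$ of $\sigma$ disjoint from the two supports (insert $\tau\tau$, $\tau=(p\,q)$ with $p$ common to the two supports and $q\in\mathrm{supp}(\gamma)$, after optionally pre-writing $\gamma$ itself as a product of two cycles), which lengthens $C_1,C_2$ by a pair of positive integers of our choosing summing to $|\gamma|+1$ and multiplies their product by $\gamma$. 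I would then argue by induction on $n(\sigma)$: build $\sigma$ from its nontrivial cycles, using (i) — or, in the range where $l_2$ is small and positive genus is forced, a ``bridge'' cycle $C_2$ meeting several cycles of $\sigma$ at once (for $l_2=2$, one transposition through two of them) — as the base, and iterating the surgeries in (ii), choosing the splits so as to land exactly on the prescribed lengths $(l_1,l_2)$. Conditions (1) and (2) are precisely what make the relevant length–intervals overlap and what keeps the ``common point'' of $C_1$ and $C_2$ — needed for the grafting move — available throughout.

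The only real difficulty lies in that last sentence: the bookkeeping required to show that the lengths reachable by these moves always cover the whole admissible range, and that the boundary cases work out — no spare fixed points (so the base must itself carry positive genus), cycles of $\sigma$ of length $2$ (which cannot be pre-split), $\sigma$ itself being a single cycle or a product of two cycles, and the ambient constraint $l_1\leqslant n$. There is no new idea there, but that is where essentially all the work is.
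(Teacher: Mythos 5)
This is one of the two places where a comparison with ``the paper's own proof'' is slightly artificial: the paper does not prove this statement at all. It quotes it as Theorem 7 of [HKL] and only records, in the two sentences following the statement, the skeleton of the construction: an explicit formula exhibiting any $\sigma$ as a product of a cycle of length $m(\sigma)$ and a cycle of length $n(\sigma)$, followed by two local moves, $(l_1,l_2)\mapsto(l_1+1,l_2+1)$ and $(l_1,l_2)\mapsto(l_1-1,l_2+1)$, with the details deferred to [HKL]. Your ``only if'' direction is correct and is genuinely different from the combinatorial argument in [HKL]: the sign computation gives the parity in (1), and the transitive-constellation Euler formula $c_A(C_1)+c_A(C_2)+c_A(\sigma)=|A|+2-2g$ together with $|A|\geqslant m(\sigma)$ and $|A|\geqslant l_1$ gives both inequalities cleanly. (Your reduction of the disjoint-support case to the first alternative is also the right way to see why that alternative is needed: there $s$ would have to be $-1$.)

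The converse, however, is where the entire content of the theorem sits, and you have not proved it; you say so yourself (``that is where essentially all the work is''). What you give is a list of admissible surgery moves and an intended induction on $n(\sigma)$, but no argument that the set of length-pairs $(l_1,l_2)$ reachable by those moves is exactly the region cut out by (1) and (2) — and the boundary cases you flag (no spare fixed points, $2$-cycles of $\sigma$, the constraint $l_1\leqslant n$) are precisely the ones that make this nontrivial. As it stands this is a plan, not a proof, and it is organized less efficiently than the route the paper sketches: starting from the single explicit factorization at the extreme point $(l_1,l_2)=(m(\sigma),n(\sigma))$ of the admissible region and applying the two moves above, conditions (1) and (2) say exactly that the target pair is reachable, which replaces your induction on $n(\sigma)$ and most of the anticipated case analysis (though even there the order of the moves and the supply of fixed points still require care). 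Either complete the region-covering argument in detail or simply cite [HKL], as the paper does.
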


For a permutation $p\in S_n$ denote by $dcd(p)$ the canonic decomposition of $p$ into cycles. For eample, if $dcd(\sigma)=(1,2,\ldots,m_1)(m_1+1,\ldots,m_2)\ldots(m_{n(\sigma)-1}+1,\ldots,m_{n(\sigma)})$ one can check that:
\[\sigma=(1,2\ldots,m_{n(\sigma)})(m_{n(\sigma)-1}+1,\ldots,m_2+1,m_1+1,1).\]
The two cycles have lengths $m(\sigma)$ and $n(\sigma)$ respectively. By some tricks in $S_n$ one can add one element to both cycles, or increase the smaller cycle by one while decreasing the larger cycle by one. The detailes can be checked in the cited article.

Now we need to extend this result in $\Pi_{k\to\omega}S_n$. We get the following theorem.

\begin{te}
Let $p\in\Pi_{k\to\omega}S_{n_k}$ and $q_1,q_2\in cyc(\{1,\infty\})$. Assume that $cyc_\infty(q_1)\geqslant cyc_\infty(q_2)>0$. Then $p\in Cl(q_1)Cl(q_2)$ if and only if:
\begin{enumerate}
\item $m(p)+n(p)\leqslant cyc_\infty(q_1)+cyc_\infty(q_2);$
\item $m(p)-n(p)\geqslant cyc_\infty(q_1)-cyc_\infty(q_2).$
\end{enumerate}
\end{te}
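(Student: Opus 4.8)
The plan is to deduce both directions from the finite result, Theorem~\ref{productofcycles}, by passing to well-chosen representatives. Write $c_i=cyc_\infty(q_i)$, so that $c_1\geqslant c_2>0$. By Proposition~\ref{conjugacyclasses} an element of $cyc(\{1,\infty\})$ with $cyc_\infty=c$ is conjugate to the ultraproduct of single cycles of lengths $\approx c\,n_k$, and $Cl(q_1)Cl(q_2)$ depends only on the two conjugacy classes; so I may assume $q_i=\Pi_{k\to\omega}\tilde q_{i,k}$ with $\tilde q_{i,k}\in S_{n_k}$ a single cycle of length $l_{i,k}$, where $l_{i,k}/n_k\to c_i$ and $2\leqslant l_{2,k}\leqslant l_{1,k}\leqslant n_k$ for $\omega$-most $k$ (using $c_1\geqslant c_2>0$).

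For the direct implication, write $p=r_1r_2$ with $r_i\in Cl(q_i)$, so $r_i=\Pi_{k\to\omega}r_{i,k}$ with $r_{i,k}$ a cycle of length $l_{i,k}$ (being conjugate to $\tilde q_{i,k}$), and take $p_k:=r_{1,k}r_{2,k}$ as a representative of $p$. Apply Theorem~\ref{productofcycles} with $\sigma=p_k$, $C_1=r_{1,k}$, $C_2=r_{2,k}$: for each $k$ either $r_{1,k}r_{2,k}$ is the canonic decomposition of $p_k$, forcing $m(p_k)=l_{1,k}+l_{2,k}$ and $n(p_k)=2$, or conditions (1),(2) of that theorem hold, giving $m(p_k)+n(p_k)\leqslant l_{1,k}+l_{2,k}$ and $l_{1,k}-l_{2,k}\leqslant m(p_k)-n(p_k)$. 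One of the two alternatives holds on an $\omega$-large set; dividing by $n_k$ and taking $\lim_{k\to\omega}$ yields in both cases $m(p)+n(p)\leqslant c_1+c_2$ and $m(p)-n(p)\geqslant c_1-c_2$ (in the canonic-decomposition case one even gets $m(p)=c_1+c_2$ and $n(p)=0$, which again satisfies (1),(2)).

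For the converse, assume (1) and (2) and fix a representative $p=\Pi_{k\to\omega}p_k$. The aim is, for $\omega$-most $k$, to pick integers $l_{1,k}\geqslant l_{2,k}\geqslant 2$ with $l_{1,k}\leqslant n_k$, $l_{i,k}/n_k\to c_i$, and a permutation $\pi_k$ agreeing with $p_k$ off boundedly many points, such that
\[l_{1,k}+l_{2,k}\geqslant m(\pi_k)+n(\pi_k),\qquad l_{1,k}+l_{2,k}\equiv m(\pi_k)+n(\pi_k)\pmod{2},\qquad l_{1,k}-l_{2,k}\leqslant m(\pi_k)-n(\pi_k).\]
Theorem~\ref{productofcycles} then yields cycles $C_{i,k}$ of length $l_{i,k}$ with $\pi_k=C_{1,k}C_{2,k}$; since $\Pi_{k\to\omega}C_{i,k}\in cyc(\{1,\infty\})$ has $cyc_\infty=c_i$, it lies in $Cl(q_i)$ by Proposition~\ref{conjugacyclasses}, and $p=\Pi_{k\to\omega}\pi_k=(\Pi_{k\to\omega}C_{1,k})(\Pi_{k\to\omega}C_{2,k})\in Cl(q_1)Cl(q_2)$. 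The lengths are taken to be $\lceil c_in_k\rceil$ modified by an $o(n_k)$ correction: inequality (1) for $p$ controls how large $l_{1,k}+l_{2,k}$ must be and inequality (2) how small $l_{1,k}-l_{2,k}$ may be, and when these hold strictly there is linear slack, so only the parity has to be fixed by a unit shift of a suitable length; when they hold with equality the correction absorbs the sublinear terms $m(p_k)+n(p_k)-(c_1+c_2)n_k$ and $(c_1-c_2)n_k-(m(p_k)-n(p_k))$, which are $o(n_k)$, and one uses that $c_2\leqslant c_1/2<1$ whenever $c_1=1$ and both inequalities are tight, so that $l_{2,k}$ still has room to grow. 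A bounded perturbation $\pi_k$ of $p_k$ is invoked only to repair parity in the rigid corner where no length has room for a unit shift.

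The technical heart is exactly this last construction: simultaneously meeting the size inequality, the parity constraint, the ceiling $l_{1,k}\leqslant n_k$, and the normalisation $l_{i,k}/n_k\to c_i$, with only $o(n_k)$ room in the boundary cases $m(p)+n(p)=c_1+c_2$ or $m(p)-n(p)=c_1-c_2$, and treating separately the subcases $c_1<1$ versus $c_1=1$ and $c_1>c_2$ versus $c_1=c_2$. Everything else reduces, by the above, to quoting Theorem~\ref{productofcycles} and Proposition~\ref{conjugacyclasses}.
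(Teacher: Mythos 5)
Your forward implication is complete and is exactly the paper's argument: pass to single-cycle representatives, note that on an $\omega$-large set either the product is the canonic decomposition or conditions (1),(2) of Theorem~\ref{productofcycles} hold, and take limits. The reduction to single-cycle representatives via Proposition~\ref{conjugacyclasses} is also fine.

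The converse is where the actual content of the theorem lives, and you have announced it rather than carried it out: you correctly list the simultaneous constraints on $l_{1,k},l_{2,k}$ (size inequality, parity, $l_{1,k}-l_{2,k}\leqslant m(p_k)-n(p_k)$, $2\leqslant l_{2,k}\leqslant l_{1,k}\leqslant n_k$, normalisation) and flag the boundary cases, but you never exhibit a choice of lengths meeting them, and the one concrete mechanism you commit to is the wrong one. Parity never requires touching $p_k$: since $l_{1,k}+l_{2,k}$ only has to be $\geqslant m(p_k)+n(p_k)$ and congruent to it modulo $2$, one takes $l_{1,k}+l_{2,k}=m(p_k)+n(p_k)+2s_k$ with an integer $s_k\geqslant 0$; the paper achieves this with a free constant $c\in\{0,1\}$ added to $l_1^k$. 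Conversely, a \emph{bounded} perturbation of $p_k$ cannot secure the genuinely delicate constraints $l_{1,k}\leqslant n_k$ and $l_{2,k}\leqslant l_{1,k}$: the error terms $m(p_k)-n_km(p)$ and $\bigl(m(p_k)-n(p_k)\bigr)-n_k\bigl(m(p)-n(p)\bigr)$ are only $o(n_k)$ and may be positive and unbounded (of order $\sqrt{n_k}$, say), which is why the paper performs $o(n_k)$ surgery on $p_k$ --- adding fixed points to force $m(p_k)\leqslant n_km(p)-2$ and gluing cycles to force $m(p_k)-n(p_k)\geqslant n_k\bigl(m(p)-n(p)\bigr)+3$ --- before setting $l_1^k=m(p_k)+[n_k(cyc_\infty(q_1)-m(p))]+c$ and $l_2^k=n(p_k)+[n_k(cyc_\infty(q_2)-n(p))]+2$ and verifying every inequality. (One can in fact avoid modifying $p_k$ by parametrising $l_{1,k}=m(p_k)+s_k-t_k$, $l_{2,k}=n(p_k)+s_k+t_k$ with suitable $s_k,t_k\geqslant 0$, but that verification is precisely the missing content.) You also do not address the case $n(p)=0$, which the paper settles separately via the constructions of Proposition~\ref{inftycycles}.
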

\begin{proof}
Let $p,q_1,q_2$ as in the hypothesis be such that $p=q_1q_2$. Let $q_i=\Pi_{k\to\omega}q_i^k$, $i=1,2$, where each $q_i^k\in S_{n_k}$ is composed of cycle of length $l_i^k$.
Then $\lim l_i^k/n_k=cyc_\infty(q_i)$.

Let $p^k=q_1^kq_2^k$ such that $p=\Pi_{k\to\omega}p^k$. If $\{i:q_1^kq_2^k=dcd(p^k)\}\in\omega$ then $p\in cyc(\{1,\infty\})$, $m(p)=cyc_\infty(p)=cyc_\infty(q_1)+cyc_\infty(q_2)$
and $n(p)=0$. The second condition is trivial.

Thus, we may assume that $q_1^kq_2^k\neq dcd(p^k)$ for all $k$. Then by the previous theorem, $l_1^k+l_2^k\geqslant m(p^k)+n(p^k)$ and $l_1^k-l_2^k\leqslant m(p^k)-n(p^k)$.
Dividing by $n_k$ and passing to limit we get the conclusion.

Now let $p,q_1,q_2$ be as in the hypothesis such that conditions $(1)$ and $(2)$ hold. Assume that $n(p)>0$. Let $p=\Pi_{k\to\omega}p^k$. We want to change permutations $p^k$ to get the following inequalities:
\[m(p)-\frac{m(p^k)}{n_k}-\frac2{n_k}\geqslant 0;\ \ \ n(p)-\frac{n(p^k)}{n_k}-m(p)+\frac{m(p^k)}{n_k}-\frac3{n_k}\geqslant 0.\]
For the first inequality we increase the number of fixed points of $p^k$ until we get it. For the second inequality we glue some cycles in $p^k$ leaving $m(p^k)$ unchanged. We can make a sufficient small number of changes such that the element  $p=\Pi_{k\to\omega}p^k$ is unchanged. Define now:
\begin{align*}
l_1^k=&m(p^k)+[n_k(cyc_\infty(q_1)-m(p))]+c;\\
l_2^k=&n(p^k)+[n_k(cyc_\infty(q_2)-n(p))]+2.
\end{align*}
One requirement of Theorem \ref{productofcycles} is that $l_1^k+l_2^k-m(p^k)-n(p^k)$ is even. We choose $c\in\{0,1\}$ in order to satisfy this requirement. Now let's check the inequalities. Clearly $l_2^k\geqslant 2$ and $l_1^k+l_2^k\geqslant m(p^k)+n(p^k)$. Now:
\begin{align*}
l_1^k-l_2^k\geqslant& m(p^k)+n_k(cyc_\infty(q_1)-m(p))-n(p^k)-n_k(cyc_\infty(q_2)-n(p))+c-3\\
\geqslant&n_kn(p)-n(p^k)-n_km(p)+m(p^k)-3\geqslant 0.
\end{align*}
Also:
\begin{align*}
l_1^k-l_2^k\leqslant& m(p^k)+n_k(cyc_\infty(q_1)-m(p))-n(p^k)-n_k(cyc_\infty(q_2)-n(p))+c-1\\
\leqslant&  m(p^k)-n(p^k)-n_k(m(p)-n(p)-cyc_\infty(q_1)+cyc_\infty(q_2))\leqslant m(p^k)-n(p^k).
\end{align*}
Lastly:
\[l_1^k\leqslant m(p^k)-n_km(p)+1+c+n_kcyc_\infty(q_1)\leqslant n_k.\]
From Theorem \ref{productofcycles} we deduce the existence of $q_1^k,q_2^k$ cycles of length $l_1^k$ and $l_2^k$ respectively such that $p^k=q_1^kq_2^k$. By construction $\lim_{k\to\omega}l_i^k/n_k=cyc_\infty(q_i)$ so $\Pi_{k\to\omega}q_i^k\in Cl(q_i)$ for $i=1,2$. It follows that $p\in Cl(q_1)Cl(q_2)$.

If $n(p)=0$ then $p\in cyc(\{1,\infty\})$ and $m(p)=cyc_\infty(p)$. We have the inequalities  $cyc_\infty(q_1)-cyc_\infty(q_2)\leqslant cyc_\infty(p)\leqslant  cyc_\infty(q_1)+cyc_\infty(q_2)$. There are two cases, whenever $cyc_\infty(p)>cyc_\infty(q_1)$ or $cyc(p)\leqslant cyc_\infty(q_1)$. These cases can be setteled as in the proof of Proposition \ref{inftycycles}.
\end{proof}

\section{The main result}

\begin{te}\label{tracepreserving}
Each automorphism of  $\Pi_{k\to\omega}S_{n_k}$ preserves the Hamming distance.
\end{te}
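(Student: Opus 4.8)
The plan is to reduce the theorem to the single assertion that $m(\Phi(r))=m(r)$ for every $r\in\Pi_{k\to\omega}S_{n_k}$. This suffices: by bi-invariance of the Hamming distance, $d_{Hamm}(p,q)=d_{Hamm}(pq^{-1},Id)=m(pq^{-1})$, and since $\Phi$ is multiplicative, $d_{Hamm}(\Phi(p),\Phi(q))=m(\Phi(pq^{-1}))$, which equals $d_{Hamm}(p,q)$ once we know $m\circ\Phi=m$. So fix $r$; we may assume $r\neq Id$, as $\Phi(Id)=Id$.

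The heart of the argument is that $\Phi$ fixes, setwise, every product of two conjugacy classes of elements of $cyc(\{1,\infty\})$. For reals $a,b$ with $1\geqslant a\geqslant b>0$ I pick $q_1,q_2\in cyc(\{1,\infty\})$ with $cyc_\infty(q_1)=a$ and $cyc_\infty(q_2)=b$ (for instance single-cycle elements of normalized lengths $a$ and $b$); by Proposition \ref{conjugacyclasses} the product $\mathcal{C}(a,b):=Cl(q_1)Cl(q_2)$ depends only on $(a,b)$. By Theorem \ref{preservingclass1infty}, $Cl(\Phi(q_i))=Cl(q_i)$, and since an automorphism is multiplicative and sends conjugacy classes to conjugacy classes,
\[\Phi(\mathcal{C}(a,b))=\Phi(Cl(q_1))\,\Phi(Cl(q_2))=Cl(\Phi(q_1))\,Cl(\Phi(q_2))=\mathcal{C}(a,b),\]
so $r\in\mathcal{C}(a,b)\Leftrightarrow\Phi(r)\in\mathcal{C}(a,b)$ for every admissible pair $(a,b)$. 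I then invoke the ultraproduct analogue of Theorem \ref{productofcycles} proved just above, which states
\[r\in\mathcal{C}(a,b)\ \Longleftrightarrow\ m(r)+n(r)\leqslant a+b\ \ \text{and}\ \ m(r)-n(r)\geqslant a-b,\]
together with the same equivalence for $\Phi(r)$.

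The last step is to read $m(r)$ off the set $\{(a,b):r\in\mathcal{C}(a,b)\}$. Taking $a=b=t\in(0,1]$ makes the second inequality automatic, so $r\in\mathcal{C}(t,t)\Leftrightarrow m(r)+n(r)\leqslant 2t$, whence, using $m(r)+n(r)\leqslant 2$,
\[m(r)+n(r)=\min\{\,2t:t\in(0,1],\ r\in\mathcal{C}(t,t)\,\}.\]
Taking $a=1$, the two inequalities collapse to $b\geqslant 1-m(r)+n(r)$ (the remaining one, $b\geqslant m(r)+n(r)-1$, being weaker since $m(r)\leqslant 1$), so
\[m(r)-n(r)=1-\inf\{\,b:b\in(0,1],\ r\in\mathcal{C}(1,b)\,\}.\]
Both right-hand sides are expressed solely through membership of $r$ in the $\Phi$-invariant sets $\mathcal{C}(a,b)$, hence are unchanged when $r$ is replaced by $\Phi(r)$; adding the two identities yields $2m(r)=2m(\Phi(r))$, which proves the theorem. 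For $r\in cyc(\{1,\infty\})$ one may instead read $m(\Phi(r))=m(r)$ directly off Theorem \ref{preservingclass1infty}, and the ultraproduct analogue of Theorem \ref{productofcycles} holds for arbitrary $r$, so no genuine case split is forced.

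I do not anticipate a deep obstacle here: the real work has been carried out in Theorem \ref{preservingclass1infty} and in the ultraproduct product-of-two-cycles theorem, and what remains is essentially the inversion of the linear inequalities occurring there. The points that call for a careful word are the admissibility of each pair $(a,b)$ fed to that theorem --- in particular the strict inequality $b>0$, which is why I use an infimum in the $b$-direction rather than a minimum --- and the fact that the extremal values above are well defined (attained when $m(r)<1$, a limiting infimum when $r\in cyc(\{\infty\})$); the bookkeeping near $a=1$, $b\to 0$ is the spot most likely to need an explicit sentence.
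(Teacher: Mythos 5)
Your proof is correct and follows essentially the same route as the paper: both arguments rest on Theorem \ref{preservingclass1infty} together with the ultraproduct version of Theorem \ref{productofcycles}, using the $\Phi$-invariance of the products $Cl(q_1)Cl(q_2)$ with $q_1,q_2\in cyc(\{1,\infty\})$ to pin down $m(p)\pm n(p)$. The only divergence is in the last step --- the paper substitutes the single extremal pair $cyc_\infty(q_1)=m(p)$, $cyc_\infty(q_2)=n(p)$ and invokes $\Phi^{-1}$ for the reverse inequalities, while you scan the families $\mathcal{C}(t,t)$ and $\mathcal{C}(1,b)$ and recover $m\pm n$ as optima; as a small bonus your variant is explicit about the degenerate case $n(p)=0$, where the paper's choice $cyc_\infty(q_2)=n(p)$ technically violates the hypothesis $cyc_\infty(q_2)>0$ of that theorem.
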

\begin{proof}
Let $p\in\Pi_{k\to\omega}S_{n_k}$ and $q_1,q_2\in cyc(\{1,\infty\})$ such that $cyc_\infty(q_1)=m(p)$ and $cyc_\infty(q_2)=n(p)$. By the previous theorem $p\in Cl(q_1)Cl(q_2)$. From Theorem \ref{preservingclass1infty} we deduce that $Cl(q_i)=Cl(\Phi(q_i))$ so  $\Phi(p)\in Cl(q_1)Cl(q_2)$. Using again the previous theorem in the opposite direction we get the inequalities:
\[m(\Phi(p))+n(\Phi(p))\leqslant m(p)+n(p)\mbox{ and } m(\Phi(p))-n(\Phi(p))\geqslant m(p)-n(p).\]
From these inequalities we deduce $n(\Phi(p))\leqslant n(p)$. We can get the reverse inequality using $\Phi^{-1}$, so  $n(\Phi(p))=n(p)$. Now we also get  $m(\Phi(p))=m(p)$.

This is equivalent to $cyc_1(\Phi(p))=cyc_1(p)$, so the Hamming distance between an arbitrary element and the identity is preserved. This is enough because the Hamming distance is bi-invariant.
\end{proof}

\begin{te}
Any automorphism of $\Pi_{k\to\omega}S_{n_k}$ is class preserving.
\end{te}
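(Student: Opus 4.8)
The plan is to combine Theorem \ref{tracepreserving}, which says an automorphism $\Phi$ preserves $m(\cdot)$ and $n(\cdot)$ (equivalently the Hamming distance, equivalently $cyc_1$), with the characterization of conjugacy classes in Proposition \ref{conjugacyclasses}. By Proposition \ref{conjugacyclasses}, to show $Cl(\Phi(p))=Cl(p)$ for every $p$ it suffices to show $cyc_i(\Phi(p))=cyc_i(p)$ for all $i\in\nz^*$. We already know this for $i=1$ from Theorem \ref{tracepreserving}. The idea is to bootstrap from $i=1$ to all $i$ using the multiplicative structure of the group, which $\Phi$ respects.

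First I would recall the identity $cyc_1(p^i)=\sum_{j\mid i}cyc_j(p)$, which was used repeatedly above. Since $\Phi$ is a homomorphism, $\Phi(p^i)=\Phi(p)^i$, and applying Theorem \ref{tracepreserving} to the element $p^i$ gives $cyc_1(\Phi(p)^i)=cyc_1(\Phi(p^i))=cyc_1(p^i)$ for every $i\in\nz^*$. Thus $\sum_{j\mid i}cyc_j(\Phi(p))=\sum_{j\mid i}cyc_j(p)$ for all $i$. Now one proceeds by strong induction on $i$: the base case $i=1$ is immediate, and assuming $cyc_j(\Phi(p))=cyc_j(p)$ for all proper divisors $j$ of $i$, the displayed equality for that $i$ forces $cyc_i(\Phi(p))=cyc_i(p)$. (Equivalently, one can simply quote the Möbius-inversion formula displayed in the introduction, which expresses each $cyc_i(p)$ as a fixed integer combination of values $cyc_1(p^{d})$ for various $d$; since $\Phi$ preserves every such $cyc_1(p^d)$, it preserves $cyc_i$.)

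This establishes $cyc_i(\Phi(p))=cyc_i(p)$ for all finite $i$, and hence also $cyc_\infty(\Phi(p))=1-\sum_{i\geqslant 1}cyc_i(\Phi(p))=cyc_\infty(p)$. By Proposition \ref{conjugacyclasses}, $\Phi(p)$ and $p$ are conjugate, i.e. $Cl(\Phi(p))=Cl(p)$, which is precisely the assertion that $\Phi$ is class preserving.

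There is essentially no serious obstacle here: the entire content of the theorem is packed into Theorem \ref{tracepreserving}. The only mild point to be careful about is the interchange of limits implicit in the identity $cyc_1(p^i)=\sum_{j\mid i}cyc_j(p)$ for elements of the ultraproduct, but this is exactly the well-definedness statement already granted in the introduction (and the sum over $j\mid i$ is finite, so no convergence issue arises). So the proof is a short deduction: rewrite in terms of $cyc_1$ of powers, invoke trace-preservation on each power, invert to recover each $cyc_i$, and conclude via Proposition \ref{conjugacyclasses}.
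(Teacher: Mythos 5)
Your proposal is correct and follows exactly the paper's argument: apply Theorem \ref{tracepreserving} to each power $p^i$, use the identity $cyc_1(p^i)=\sum_{j\mid i}cyc_j(p)$ with induction (equivalently M\"obius inversion) to recover all $cyc_i$, and conclude via Proposition \ref{conjugacyclasses}. No differences worth noting.
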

\begin{proof}
Let $\Phi$ be an automorphism of $\Pi_{k\to\omega}S_{n_k}$ and let $p\in\Pi_{k\to\omega}S_{n_k}$. By Theorem \ref{tracepreserving} we get that $cyc_1(p)=cyc_1(\Phi(p))$. It is easy to see that $cyc_1(p^i)=\sum_{j|i}cyc_j(p)$ for any $i\in\nz^*$. Now the equality $cyc_i(p)=cyc_i(\Phi(p))$ follows from Theorem \ref{tracepreserving} and an induction step.

The result follows now from Proposition \ref{conjugacyclasses}.
\end{proof}

LIVIU P\u AUNESCU, \emph{INSTITUTE of MATHEMATICS "S. Stoilow" of the ROMANIAN ACADEMY} email:
liviu.paunescu@imar.ro
\end{document}